\theoremstyle{plain}
\newtheorem{theorem}{Theorem}[section]
\newtheorem{definition}[theorem]{Definition}
\newtheorem{lemma}[theorem]{Lemma}
\newtheorem{proposition}[theorem]{Proposition}
\theoremstyle{remark}
\newtheorem{remark}[theorem]{Remark}
\def\C{{\mathbb C}}
\def\R{{\mathbb R}}
\def\N{{\mathbb N}}
\def\Sch{{\mathcal S}}
\def\F{\mathcal F}
\def\({\left(}
\def\){\right)}
\def\<{\left\langle}
\def\>{\right\rangle}
\def\le{\leqslant}
\def\ge{\geqslant}
\def\Tend#1#2{\mathop{\longrightarrow}\limits_{#1\rightarrow#2}}
\def\eps{\varepsilon}
\numberwithin{equation}{section}
\begin{document}
\title[Standing waves for quantum dipole]{Complementary study of the standing wave
  solutions of the
Gross-Pitaevskii equation in dipolar quantum gases}
\author[R. Carles]{R\'emi Carles}
\address{CNRS \& Univ. Montpellier~2\\Math\'ematiques
\\CC~051\\34095 Montpellier\\ France}
\email{Remi.Carles@math.cnrs.fr}
\author[H. Hajaiej]{Hichem Hajaiej}
\address{College of Sciences\\ King Saud University\\ 11451 Riyadh \\
  Department of Mathematics}
\email{hhajaiej@ksu.edu.sa}
\begin{abstract}
We study the stability of the standing wave solutions of a
Gross-Pitaevskii equation describing Bose-Einstein condensation of
dipolar quantum gases and characterize their orbit. As an
intermediate step, we consider the corresponding constrained
minimization problem and establish existence, symmetry and
uniqueness of the ground state solutions.
\end{abstract}
\thanks{This work was supported by the French ANR projects SchEq
  (ANR-12-JS01-0005-01) and \href{http://bond.math.cnrs.fr}{BoND} (ANR-13-BS01-0009-01).}
\maketitle

\section{Introduction}

Since the experimental realization of the first Bose-Einstein
condensate (BEC) by Eric Cornell and Carl Wieman in 1995, tremendous
efforts  have been undertaken by mathematicians to exploit this
achievement especially in atomic physics and optics. In the last
years, a new kind of quantum gases with dipolar interaction, which
acts between particles as a permanent magnetic or electric dipole
moment has attracted the attention of a lot of scientists. The
interactions between particles are both long-range and
non-isotropic. Describing the corresponding BEC via Gross Pitaevskii
approximation, one gets the following nonlinear Schr\"odinger
equation
\begin{equation}\label{eq:1.1}
  i\hbar \partial_t \psi = - \frac{\hbar^2}{2m} \Delta
\psi + g|\psi|^2\psi +d^2(K \ast |\psi|^2) \psi +
V(x)\psi, \quad
t \in \mathbb{R}, x \in \R^3,
\end{equation}
where $|g| = \frac{4\pi \hbar^2 N|a|}{m}$, $N \in \N$ is the number
of particles, $m$ denotes the mass of individual particles and $a$ its
corresponding scattering length. The external potential $V(x)$
describes the electromagnetic trap and has the following harmonic
confinement
\begin{equation*}
  V(x) = \frac{|x|^2}{2}.
\end{equation*}
The factor $d^2$ denotes the strength of the dipole moment in Gaussian units
and
\begin{equation}
  \label{eq:1.3}
  K(x) = \frac{1-3\cos^2\theta}{|x|^3},
\end{equation}
where $\theta = \theta(x)$ is the angle between $x \in \mathbb{R}^3$
and the dipole axis $n \in \mathbb{R}^3$. The local term
$g|\psi|^2\psi$ describes the short-range interaction forces between
particles, while the non-local potential $K\ast 
|\psi|^2$ describes their
long-range dipolar interactions.\\
For the mathematical analysis, it is more convenient to rescale
\eqref{eq:1.1} into the following dimensionless form
\begin{equation}
  \label{eq:1.4}
 i\partial_t \psi + \frac{1}{2} \Delta \psi = \frac{|x|^2}{2} \psi
 + \lambda_1 |\psi|^2 \psi +
\lambda_2 (K \ast |\psi|^2)\psi,
\end{equation}
where
$$\lambda_1 = \frac{4\pi a N}{a_0},\quad \lambda_2 =
\frac{d^2}{\hbar w_0a^3_0},\quad
\text{and}\quad a_0 = \sqrt{\frac{\hbar}{m}}.$$
In the following, we assume that
$\lambda_1$ and $\lambda_2$ are two given real-valued parameters.\\
In \cite{CMS08}, the authors have studied the existence and uniqueness of the
equation \eqref{eq:1.4} with initial condition $\psi_0\in H^1(\R^3)$,
\begin{equation}
  \label{eq:1.5}
    i\partial_t \psi + \frac{1}{2} \Delta \psi = \frac{|x|^2}{2} \psi +
\lambda_1|\psi|^2 \psi + \lambda_2 (K \ast |\psi|^2)\psi;\quad
\psi(0,x)= \psi_0(x).
 \end{equation}
They have established that \eqref{eq:1.5} has a unique, global solution if $\lambda_1
\ge \frac{4}{3}\pi \lambda_2 \ge 0$. They called this situation
\emph{stable regime}, referring to the fact that no singularity in
formed in finite time. In this paper, we study another notion of
stability, that is, the stability of standing waves. 
They have also showed that in the \emph{unstable regime} $(\lambda_1 <
\frac{4}{3} \pi \lambda_2)$,  finite time blow up may occur,
hence the denomination.
\smallbreak

The evidence of blow-up relies on a function for which the
corresponding energy is strictly negative (\cite[Lemma 5.1]{CMS08}).
They concluded using the virial approach of Zakharov and Glassey.
Some refinements of the above result have been discussed   in
\cite[Proposition~5.4]{CMS08}.
\smallbreak

The most important  issue in view of the applications of
\eqref{eq:1.5} in atomic physics and quantum optics seems to be the
study of ground state  solutions of \eqref{eq:1.5}. These solutions
are the ``only'' observable states in experiments. A standing wave
solution of \eqref{eq:1.5} is a wave function having the particular
form $\psi(t,x) = e^{i\mu t} u(x), \mu \in \R$. Therefore
$\psi(t,x)$ is a solution of \eqref{eq:1.1} if and only if $u$
solves the following elliptic partial differential equation:
\begin{equation}
  \label{eq:1.6}
  - \frac{1}{2} \Delta u + \lambda_1 |u|^2 u + \lambda_2 (K \ast u^2) u +
\frac{|x|^2}{2} u + \mu u = 0.
\end{equation}
Ground state solutions
are the solutions of \eqref{eq:1.6} obtained by minimizing an associated
energy functional. The most common way to get such minimizers is to
consider the corresponding Weinstein functional or the constrained
energy functional.
\smallbreak

In \cite{AnSp11}, the authors have studied \eqref{eq:1.6} (without
the term $V(x))$ by using the first approach. More precisely, they
introduced the following minimization problem:
\begin{equation}
  \label{eq:1.7}
  \inf_{v \in H^1(\R^3)}J(v), \quad \text{where} \quad  J(v) =
  \frac{\|\nabla v\|^3_{L^2}\|v\|_{L^2}}
{-\lambda_1 \|v\|^4_{L^4} - \lambda_2\langle K \ast |v|^2,
|v|^2\rangle},
\end{equation}
and $\langle \cdot ,\cdot \rangle$ is the scalar product
in $L^2(\R^3)$.

Using various tricks, they were able to show that \eqref{eq:1.7} is achieved
when $\lambda_1 < \frac{4}{3} \pi \lambda_2$ if $\lambda_2 > 0$ and
$\lambda_1 < - \frac{8\pi}{3} \lambda_2$ if $\lambda_2 < 0$. They
then deduced the main result of their paper  (\cite[Theorem 1.1]{AnSp11}),
which we recall for the convenience of the reader.
\begin{theorem}[Antonelli--Sparber \cite{AnSp11}]
  Let $\lambda_1, \lambda_2 \in \R$ be such the following
condition holds
\begin{equation}\label{eq:1.8}
  \lambda_1 <
\left\{
  \begin{aligned}
    \frac{4\pi}{3} \lambda_2 &\mbox{ if } \lambda_2 > 0,\\
- \frac{8\pi}{3} \lambda_2 &\mbox{ if } \lambda_2 < 0.
  \end{aligned}
\right.
\end{equation}
Then there exists a non-negative function $u\in H^1(\R^3)$ solution
to 
\begin{equation*}
  - \frac{1}{2} \Delta u + \lambda_1 |u|^2 u + \lambda_2 (K \ast u^2) u +
 \mu u = 0,\quad \mu>0.
\end{equation*}
\end{theorem}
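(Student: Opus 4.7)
The plan is to realize $u$ as (a rescaling of) a minimizer of the Weinstein quotient \eqref{eq:1.7}, using the scaling and translation invariance of $J$ together with a concentration-compactness argument. Throughout I would work with the Fourier representation of the dipolar kernel: a direct computation shows
\begin{equation*}
\widehat{K}(\xi) = \frac{4\pi}{3}\left(3\cos^2\theta_\xi - 1\right),
\end{equation*}
where $\theta_\xi$ is the angle between $\xi$ and the dipole axis $n$. Hence $\widehat{K}$ takes values in $[-\tfrac{4\pi}{3},\tfrac{8\pi}{3}]$, and by Plancherel
\begin{equation*}
-\tfrac{4\pi}{3}\||v|^2\|_{L^2}^2 \le \langle K\ast |v|^2,|v|^2\rangle \le \tfrac{8\pi}{3}\||v|^2\|_{L^2}^2.
\end{equation*}
These bounds are saturated along suitable sequences (concentrated in Fourier either transverse or parallel to $n$), so condition \eqref{eq:1.8} is \emph{exactly} what is needed to ensure that the denominator of $J(v)$ is positive for some $v\in H^1$, and hence that $I:=\inf J < +\infty$.

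Next I would show $I>0$. The Hardy--Littlewood--Sobolev / Plancherel bound above gives
$-\lambda_1\|v\|_{L^4}^4 - \lambda_2\langle K\ast|v|^2,|v|^2\rangle \le C(\lambda_1,\lambda_2)\|v\|_{L^4}^4$,
and Gagliardo--Nirenberg in $\R^3$ yields $\|v\|_{L^4}^4 \le C\|\nabla v\|_{L^2}^3\|v\|_{L^2}$, so $J(v) \ge 1/C > 0$. Exploiting the invariances $v(x)\mapsto \alpha v(\beta x)$ I may restrict a minimizing sequence $(v_n)$ to satisfy $\|v_n\|_{L^2}=\|\nabla v_n\|_{L^2}=1$; in particular $(v_n)$ is bounded in $H^1(\R^3)$.

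To extract a minimizer I would apply the concentration-compactness lemma to the densities $|v_n|^2$. Vanishing is excluded because by Lions' lemma it would force $v_n\to 0$ in $L^4$, making the denominator of $J$ tend to $0$ and contradicting $I<\infty$. Dichotomy is ruled out in the usual way: splitting $v_n = v_n^{(1)}+v_n^{(2)}$ with disjoint supports, one checks that the cross terms in both $\|v\|_{L^4}^4$ and the nonlocal term $\langle K\ast|v|^2,|v|^2\rangle$ vanish asymptotically (this is the main place where the anisotropic kernel requires care; one uses boundedness of $\widehat K$ together with the $L^2$-convergence of $|v_n^{(1)}|^2+|v_n^{(2)}|^2 - |v_n|^2$), and then strict subadditivity of $I$ under this scaling-invariant constraint rules out splitting. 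Hence, up to translations, $v_n \to v_*$ strongly in $L^4$ and weakly in $H^1$; the boundedness of $\widehat K$ promotes this to convergence of the nonlocal term via Plancherel, and $v_*$ attains $I$. Replacing $v_*$ by $|v_*|$ is harmless (by $\||\nabla |v_*|\|_{L^2}\le\|\nabla v_*\|_{L^2}$ and the fact that both terms in the denominator depend only on $|v_*|^2$), so the minimizer may be taken non-negative.

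Finally I would write the Euler--Lagrange equation for $v_*$: differentiating the quotient $J$ gives, for some Lagrange multipliers $\alpha,\beta>0$ depending on the normalization,
\begin{equation*}
-\tfrac12\Delta v_* + \alpha\lambda_1 v_*^3 + \alpha\lambda_2(K\ast v_*^2)v_* + \beta v_* = 0,
\end{equation*}
and the rescaling $u(x) := \gamma\, v_*(\delta x)$ with $\gamma,\delta>0$ chosen appropriately absorbs $\alpha$ and identifies $\mu=\beta\delta^{-2}>0$, yielding \eqref{eq:1.6} without the harmonic term. The principal obstacle is the anisotropic, sign-changing convolution kernel: standard symmetric decreasing rearrangement is unavailable, so one must control the nonlocal term purely through the Fourier bound on $\widehat K$ throughout both the lower bound $I>0$ and the concentration-compactness step.
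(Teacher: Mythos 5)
First, a point of comparison: this paper does not prove the statement at all --- it is quoted verbatim from \cite{AnSp11}, and the only thing the paper records about its proof is that it proceeds by minimizing the Weinstein quotient $J$ in \eqref{eq:1.7}. Your strategy is therefore not an alternative route but precisely the route of the cited reference. Most of your sketch is sound: the formula and range of $\widehat K$ match Lemma~\ref{lem:2.5}; the lower bound $J\ge 1/C>0$ via Gagliardo--Nirenberg, the normalization $|v_n|_2=|\nabla v_n|_2=1$ using the invariance $v\mapsto\alpha v(\beta\cdot)$, the exclusion of vanishing (the denominator equals $1/J(v_n)\to 1/I>0$, so it cannot tend to $0$), the passage to $|v_*|$ (legitimate because the denominator depends only on $|v|$), and the final rescaling of the Euler--Lagrange equation, which produces positive coefficients precisely because $I>0$, are all correct. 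One small gap: to know that $\inf J<+\infty$ you must exhibit a $v$ whose denominator is positive; since $\rho=|v|^2\ge0$ forces $\widehat\rho(0)>0$, this requires the anisotropic concentration of $\widehat\rho$ along the directions where $\widehat K$ is extremal (exactly the construction used in the proof of part B of Theorem~\ref{theo:main}), and it works only because $\widehat K$ is homogeneous of degree zero.

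The step that does not survive scrutiny as written is the exclusion of dichotomy ``by strict subadditivity of $I$ under this scaling-invariant constraint.'' Because $J$ is invariant under $v\mapsto\alpha v(\beta\cdot)$, the infimum $I$ carries no mass parameter: there is no family $c\mapsto I_c$ and hence no subadditivity inequality to invoke. The mechanism that actually rules out splitting is the strict superadditivity of the \emph{numerator}: writing $a_i=|\nabla v_n^{(i)}|_2^2$, $b_i=|v_n^{(i)}|_2^2$, one has
\begin{equation*}
a_1^{3/2}b_1^{1/2}+a_2^{3/2}b_2^{1/2}\le (a_1+a_2)^{3/2}(b_1+b_2)^{1/2},
\end{equation*}
by H\"older's inequality for sums, with equality only when one piece is trivial; combined with asymptotic additivity of the denominator this yields the contradiction. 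For that additivity, your stated justification (boundedness of $\widehat K$ plus $L^2$-convergence of $|v_n^{(1)}|^2+|v_n^{(2)}|^2-|v_n|^2$) controls only the replacement error, not the genuine cross term $\langle K\ast|v_n^{(1)}|^2,|v_n^{(2)}|^2\rangle$: the bound $|\langle K\ast f,g\rangle|\le \frac{8\pi}{3}|f|_2|g|_2$ coming from $\widehat K\in L^\infty$ is \emph{not} small for separated bumps. You need the pointwise decay $|K(x)|\le 2|x|^{-3}$ together with the divergence of the distance between the supports, which gives a cross term $O(d_n^{-3})$. With these two repairs the argument closes, and the conclusion (non-negative $H^1$ solution with $\mu>0$) follows as you describe.
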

Note that there is no contradiction with Proposition~4.1 and
Lemma~5.1 of \cite{CMS08}, since the solitary wave constructed in
\cite{AnSp11} corresponds to an initial data with a positive energy,
while finite time blow up is established  in \cite{CMS08} only for
negative energy solutions. A complete analysis of such situations
has been done in \cite{MaCa11} and \cite{MaWa13}. The second variational formulation
associated to \eqref{eq:1.6} is the following constrained
minimization problem
\begin{equation}
  \label{eq:1.9}
  I_c = \inf \{E(u) \ ; \  u \in S_c\},
\end{equation}
where
\begin{equation}
  \label{eq:1.10}
  E(u) = \frac{1}{2} \|\nabla u\|^2_{L^2(\R^3)} + \frac{1}{2}
 \int_{\R^3} |x|^2|u|^2 +
\frac{\lambda_1}{2} \|u\|^4_{L^4(\R^3)} + \frac{\lambda_2}{2}
\int_{\R^3}(K \ast |u|^2)^2|u|^2,
\end{equation}
and
\begin{equation}
  \label{eq:1.11}
  S_c = \left\{u \in \Sigma : \int_{\R^3}
u^2 = c^2\right\},
\end{equation}
with
$$\Sigma= \left\{ u \in H^1(\R^3) :
\int_{\R^3} |x|^2 |u(x)|^2dx < \infty\right\}.$$ According to the
breakthrough paper of Grillakis, Shatah and Strauss \cite{GSS87}, the stable
solutions of \eqref{eq:1.6} are the ones obtained via the variational problem
\eqref{eq:1.9}. In \cite{AnSp11}, the authors seem to be very
skeptical concerning the
use of such approach  in this context. In \cite[p.~427]{AnSp11},
after the introduction of the energy functional they stated
``At this point it might be tempting to study \eqref{eq:1.6} via minimization
of the energy $E(u)$. However, it is
well known, that even without the dipole nonlinearity, i.e.  $\lambda_2
= 0$, this
approach fails (\dots)''. Note however that in \cite{AnSp11},
\eqref{eq:1.1} is considered in the absence of an external potential,
$V=0$. A key aspect in our approach consists in using a balance
between both nonlinear terms, the cubic one and the dipolar one. Also,
the presence of the confining potential $V$ (not necessarily
quadratic, see below) seems to be extremely
helpful in the proof, although it is not clear whether it is necessary
or not. 
\smallbreak

Their feelings have been reinforced by the approach of Bao et al. in
\cite{BCW10}, which, however, contains some flaws, which we fix in
the present paper. Note also that our method is simpler and applies
to any potential $V(|x|)$ which increases to infinity when $|x|$
tends to infinity (the radial symmetry of the potential is needed in
order to ensure that the minimzer is Steiner symmetric). Their main
result, which we revisit here, can be 
stated as follows:
\begin{theorem}\label{theo:main}
  A) If
  \begin{equation}
    \label{eq:1.12}
    \left\{
      \begin{aligned}
        \lambda_2 > 0 &\text{ and }
\lambda_1 \ge \frac{4\pi}{3} \lambda_2,\\
\mbox{ or }\\
 \lambda_2 < 0 &\text{ and } \lambda_1 \ge -
\frac{8\pi}{3} \lambda_2,
      \end{aligned}
\right.
  \end{equation}
then \eqref{eq:1.9} has a unique non-negative minimizer, which is Steiner
symmetric.\\

B) If
\begin{equation}
  \label{eq:1.13}
  \left\{
    \begin{aligned}
      \lambda_2 > 0&\text{ and } \lambda_1 < \frac{4\pi}{3} \lambda_2,\\
\mbox{ or }\\
\lambda_2 < 0 &\text{ and } \lambda_1 < - \frac{8\pi}{3} \lambda_2,
    \end{aligned}
\right.
\end{equation}
then $I_c = - \infty$.
\end{theorem}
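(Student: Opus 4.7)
The core observation is that the dipolar interaction has an explicit Fourier symbol $\widehat{K}(\xi)=\tfrac{4\pi}{3}(3\cos^2\alpha-1)$, where $\alpha=\alpha(\xi)$ is the angle between $\xi$ and the dipole axis $n$, so that $\widehat{K}$ ranges over $[-\tfrac{4\pi}{3},\tfrac{8\pi}{3}]$. By Plancherel, the whole quartic part of the energy can be written, up to a positive constant, as $\int(\lambda_1+\lambda_2\widehat{K}(\xi))\,|\widehat{|u|^2}(\xi)|^2\,d\xi$. Hence \eqref{eq:1.12} is exactly the spectral condition $\lambda_1+\lambda_2\widehat{K}\ge 0$ pointwise, while \eqref{eq:1.13} means this symbol is negative on an open subset of $\R^3$.

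Part~B is then immediate. One picks $\phi\in C_c^\infty(\R^3)$ with $\|\phi\|_{L^2}=c$ and $\widehat{|\phi|^2}$ essentially supported where $\lambda_1+\lambda_2\widehat{K}<0$, so that $\lambda_1\|\phi\|_{L^4}^4+\lambda_2\langle K\ast|\phi|^2,|\phi|^2\rangle<0$. The concentration scaling $u_\eps(x)=\eps^{-3/2}\phi(x/\eps)$ keeps $\|u_\eps\|_{L^2}^2=c^2$, makes $\|\nabla u_\eps\|_{L^2}^2=O(\eps^{-2})$ and $\int|x|^2 u_\eps^2=O(\eps^{2})$, while the two quartic terms together are $O(\eps^{-3})$ with a negative coefficient. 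Hence $E(u_\eps)\to-\infty$ as $\eps\to 0$.

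For Part~A, Plancherel and \eqref{eq:1.12} give $E(u)\ge\tfrac12\|\nabla u\|_{L^2}^2+\tfrac12\int|x|^2|u|^2\ge 0$, so $I_c\ge 0$. A minimizing sequence $(u_n)\subset S_c$ is thus bounded in $\Sigma$, and the compact embedding $\Sigma\hookrightarrow L^p(\R^3)$ for $2\le p<6$ (a standard consequence of the harmonic confinement) yields, along a subsequence, $u_n\to u$ strongly in $L^4$, weakly in $H^1$, with $xu_n\rightharpoonup xu$ in $L^2$. The $L^2$-constraint passes to the limit, strong $L^4$ convergence handles the cubic term, and boundedness of $\widehat{K}$ makes the dipolar bilinear form continuous on $L^2\times L^2$, with $|u_n|^2\to|u|^2$ in $L^2$ coming from $u_n\to u$ in $L^4$. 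Weak lower semicontinuity of the kinetic and trap integrals then yields $E(u)\le\liminf E(u_n)=I_c$, and replacing $u$ by $|u|$ produces a nonnegative minimizer.

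For Steiner symmetry and uniqueness: symmetrizing $u\ge 0$ with respect to a hyperplane containing $n$ preserves all $L^p$ norms, decreases the Dirichlet and harmonic-trap integrals (the latter thanks to the radiality of $V$), and, exploiting the axial symmetry of $K$ about $n$, does not increase the nonlocal energy, which forces Steiner symmetry of the minimizer. For uniqueness, given two nonnegative minimizers $u_1,u_2$, set $w=\sqrt{(u_1^2+u_2^2)/2}\in S_c$. The Cauchy--Schwarz identity gives $\|\nabla w\|_{L^2}^2\le\tfrac12(\|\nabla u_1\|_{L^2}^2+\|\nabla u_2\|_{L^2}^2)$, strict unless $u_1\propto u_2$, while $\int|x|^2 w^2$ is linear in $u_i^2$. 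Under \eqref{eq:1.12} the combined quartic term is a nonnegative quadratic form in $|u|^2$, hence convex, so summing yields $E(w)\le\tfrac12(E(u_1)+E(u_2))=I_c$, forcing equality throughout and hence $u_1=u_2$. The most delicate ingredient I anticipate is the behavior of the nonlocal term under Steiner symmetrization: since $K$ is sign-changing and anisotropic, Riesz rearrangement does not apply directly, so one must exploit the axial structure of $K$ (equivalently of $\widehat{K}$) about $n$ to conclude.
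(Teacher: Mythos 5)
Your Part~A is essentially the paper's argument: the Plancherel identity $E(u)=\frac12|\nabla u|_2^2+\frac12|xu|_2^2+\frac12\int(\lambda_1+\lambda_2\widehat K(\xi))|\widehat\rho(\xi)|^2d\xi$ shows $E\ge 0$ under \eqref{eq:1.12}, a minimizing sequence is bounded in $\Sigma$, and the compact embedding $\Sigma\hookrightarrow L^4$ together with the continuity of $\rho\mapsto\int\widehat K|\widehat\rho|^2$ on $L^2$ passes the quartic terms to the limit, while the kinetic and trap terms are weakly lower semicontinuous. Your uniqueness argument via $w=\sqrt{(u_1^2+u_2^2)/2}$ --- convexity of the kinetic energy in $\rho=u^2$, linearity of the trap term, and nonnegativity (hence convexity) of the quadratic form $\rho\mapsto\int(\lambda_1+\lambda_2\widehat K)|\widehat\rho|^2$ --- is exactly the strict convexity that the paper outsources to \cite[Lemma 2.1]{BCW10}, so supplying it is a welcome addition. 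You correctly flag the Steiner symmetrization of the sign-changing, anisotropic dipolar term as the delicate point; the paper disposes of it by citing Brock \cite{Br95}, and you leave it as an acknowledged difficulty, so neither text fully discharges it, but your diagnosis is accurate.

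The genuine gap is in Part~B. You assert the existence of $\phi\in C_c^\infty(\R^3)$ with $\|\phi\|_{L^2}=c$ and $\lambda_1\|\phi\|_{L^4}^4+\lambda_2\langle K\ast|\phi|^2,|\phi|^2\rangle<0$, on the grounds that $\widehat{|\phi|^2}$ can be ``essentially supported'' where $\lambda_1+\lambda_2\widehat K<0$. This cannot be taken literally: for compactly supported $\phi$ the function $\widehat{|\phi|^2}$ is entire, and in any case $\widehat{|\phi|^2}(0)=c^2>0$, so it is never supported in a cone; what is actually needed is a quantitative statement that the mass of $|\widehat{|\phi|^2}|^2$ can be concentrated, in the sense of directions, inside the negative cone (for $\lambda_2>0$, a conical neighborhood of the plane $\xi_3=0$). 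Producing such a function is precisely the content of Part~B, and it is where the paper's anisotropic ansatz enters: with $u=\eps^{-1}f_1(x_1/\eps,x_2/\eps)\,h^{-1/2}f_2(x_3/h)$ and $h/\eps\to\infty$, the degree-zero homogeneity of $\widehat K$ forces $\lambda_1+\lambda_2\widehat K\to\lambda_1-\frac{4\pi}{3}\lambda_2<0$ on the relevant frequencies, the quartic part is of order $-\eps^{-2}h^{-1}$, and this beats the kinetic term $\eps^{-2}+h^{-2}$ for, say, $h=\sqrt\eps$; the paper emphasizes that this anisotropy is exactly what repairs the flawed argument of \cite{BCW10}. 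Once such a $\phi$ is in hand, your isotropic rescaling $u_\eps=\eps^{-3/2}\phi(\cdot/\eps)$ is correct and finishes cleanly (the homogeneity of $\widehat K$ makes the whole quartic part scale like $\eps^{-3}$), but the existence of $\phi$ is the main difficulty of the statement and cannot be left as an assertion.
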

However, the proof
of B) in \cite{BCW10} contains a flaw, which we fix here.
\smallbreak

From now on, we suppose that
$\lambda_2 > 0$. The case $\lambda_2<0$ can be treated in the same
fashion.
\smallbreak

Note that the range of $\lambda_1$ and $\lambda_2$
ensuring the existence of minimizers via Weinstein function does not
intersect at all with the one enabling us to get minimizers of
\eqref{eq:1.9}.
\smallbreak

Our paper is organized  as follows. In the next section, we fix some
notations and state some preliminary results. In
Section~\ref{sec:proof}, we
prove Theorem~\ref{theo:main}. Finally, in the last section, we 
prove the orbital stability of standing waves when \eqref{eq:1.12}
holds true. We
also characterize the orbit of standing waves.
\section{Preliminaries}
\label{sec:prelim}

\subsection{Notations}
\label{sec:notations}

The space $L^p(\R^3)$, denoted by $L^p$ for shorthand, is equipped
with the norm $|\cdot|_p$. For $w = (u,v) \in L^p \times L^p$, we set $
\|w\|^p_p =
|u|^p_p + |v|^p_p$. Similarly if
$$w = (u,v) \in H^1 \times H^1, \|w\|^2_{H^1} :=
\|w\|^2_{H^1} + \|\nabla w\|^2_{H^1,}$$
 with
$$\|\nabla w\|^2_{H^1} = |\nabla u|^2_{H^1} + |\nabla v|^2_{H^1}.$$
Recall that
$$\Sigma  = \left\{u \in H^1,\quad  |u|^2_\Sigma := |xu|^2_{L^2} +
|\nabla u|^2_2 + |u|^2_2 < \infty\right\}.$$
We set $\widetilde \Sigma=\Sigma \times \Sigma$, equipped with the
norm given by
$$\|w\|_{\widetilde \Sigma}^2= \|(u,v)\|_{\widetilde \Sigma}^2:=
|u|^2_\Sigma + |v|^2_\Sigma .$$
For $w\in \widetilde \Sigma$, we define
\begin{equation}
  \label{eq:2.1}
   \widetilde{E}(w) = \frac{1}{2}
\|\nabla w\|^2_2 + \frac{1}{2} \int_{\R^3} |x|^2 |w|^2 dx +
\frac{\lambda_1}{2} \|w\|^4_4 + \frac{\lambda_2}{2}
\int_{\R^3} (K \ast |w|^2) |w|^2 dx.
\end{equation}
Equivalently for all $c > 0$, we set
\begin{align}
  \label{eq:2.2}
  \widetilde{I}_c &= \inf \{ \widetilde{E}(w) :
 w \in  \widetilde\Sigma , \|w\|^2_2 = c^2\},\\
\widetilde{S}_c& = \{w \in \widetilde\Sigma, \|w\|^2_2 = c^2\}, \notag\\
Z_c &= \{w \in \widetilde\Sigma : \|w\|^2_2 = c^2
\mbox{ and } \widetilde{E}(w) =
 \widetilde{I}_c\}, \notag\\
W_c &= \{u \in \Sigma \cap C^1 (\R^3) \; E(u) =
I_c, |u|^2_2 = c^2 \mbox{ and }
u > 0\}. \notag
\end{align}

\subsection{Technical results}
\label{sec:tech}
We first recall two important properties of the dipole established in
\cite{CMS08}.
\begin{lemma}[Lemma~2.1 from \cite{CMS08}]\label{lem:2.4}
  The operator $\mathcal{K} : u \mapsto K \ast\; u$ can be
extended as a continuous operator on $L^p(\R^3)$ for all $1 < p < \infty$.
\end{lemma}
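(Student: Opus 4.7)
The plan is to realize $\mathcal{K}$ as a convolution with a Calderón--Zygmund kernel, equivalently as a bounded Fourier multiplier, and apply Mikhlin's theorem.

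First I would make the structural observation that $K$ is, up to sign, a second directional derivative of the Newtonian potential. A direct computation (differentiating $1/|x|$ twice in the direction of the dipole axis $n$, with $|n|=1$) gives
\begin{equation*}
\partial_n^2\!\left(\frac{1}{|x|}\right) = \frac{3\cos^2\theta - 1}{|x|^3} = -K(x),
\end{equation*}
so that $K = -\partial_n^2(1/|x|)$ in the sense of tempered distributions. A preliminary point, which must be checked in order for this identity to make sense distributionally, is that $K$ extends to a tempered distribution via principal value: this is a consequence of the fact that the angular mean vanishes,
\begin{equation*}
\int_{\mathbb{S}^2}(1 - 3\cos^2\theta)\,d\sigma = 4\pi - 3\cdot\frac{4\pi}{3} = 0,
\end{equation*}
which is precisely the Calderón--Zygmund cancellation condition.

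Next, using $\widehat{1/|x|} = 4\pi/|\xi|^2$ (from $-\Delta(1/(4\pi|x|)) = \delta$) and the standard rule $\widehat{\partial_n^2 f}(\xi) = -(n\cdot\xi)^2 \widehat{f}(\xi)$, one obtains
\begin{equation*}
\widehat{K}(\xi) = 4\pi\,\frac{(n\cdot\xi)^2}{|\xi|^2}.
\end{equation*}
The symbol $m(\xi) = (n\cdot\xi)^2/|\xi|^2$ is bounded by $1$, is smooth away from the origin, and is homogeneous of degree $0$; in particular it satisfies the hypotheses of the Mikhlin--H\"ormander multiplier theorem. Consequently $\mathcal{K}$ extends to a bounded operator on $L^p(\R^3)$ for every $1<p<\infty$.

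The main obstacle in a fully rigorous write-up is not the $L^p$ estimate itself (once the multiplier is identified, Mikhlin's theorem does all the work), but rather the careful justification that the singular convolution $K*u$ defined classically on, say, $\Sch(\R^3)$ coincides with the Fourier multiplier operator with symbol $4\pi(n\cdot\xi)^2/|\xi|^2$, which requires interpreting $K$ as a principal-value distribution. Since $K$ is homogeneous of degree $-3$ with vanishing spherical mean, this is the standard framework of Calderón--Zygmund kernels of convolution type, and the extension from $\Sch$ to $L^p$ ($1<p<\infty$) is obtained by density once boundedness on $\Sch$ is known. One can alternatively express $K$ in terms of the Riesz transforms $R_j = \partial_j(-\Delta)^{-1/2}$ and their compositions, which makes the $L^p$-boundedness immediate from the classical boundedness of the $R_j$.
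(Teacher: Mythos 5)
Your overall strategy is sound and is essentially the standard one: the paper itself gives no proof of this lemma, simply importing it from \cite{CMS08}, where the argument is the kernel-side version of what you propose --- $K$ is smooth away from the origin, homogeneous of degree $-3$, and has vanishing mean on $\mathbb{S}^2$ (your computation $4\pi-3\cdot\tfrac{4\pi}{3}=0$ is exactly the cancellation used there), so the classical Calder\'on--Zygmund theorem applies directly. Your primary route through the Fourier multiplier and Mikhlin--H\"ormander is an equally valid variant, since a symbol that is bounded, homogeneous of degree $0$ and smooth away from the origin satisfies the Mikhlin conditions.

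There is, however, one genuine error in your computation: the identity $K=-\partial_n^2(1/|x|)$ holds pointwise for $x\neq 0$ but \emph{not} in the sense of tempered distributions. The distributional second derivatives of the Newtonian potential carry a Dirac mass,
\begin{equation*}
\partial_i\partial_j\frac{1}{|x|}=\operatorname{p.v.}\frac{3x_ix_j-|x|^2\delta_{ij}}{|x|^5}-\frac{4\pi}{3}\,\delta_{ij}\,\delta_0,
\end{equation*}
so that (with $|n|=1$) one actually has $\operatorname{p.v.}K=-\partial_n^2(1/|x|)-\frac{4\pi}{3}\delta_0$ and hence
\begin{equation*}
\widehat K(\xi)=4\pi\,\frac{(n\cdot\xi)^2}{|\xi|^2}-\frac{4\pi}{3},
\end{equation*}
in agreement with Lemma~\ref{lem:2.5} (for $n=e_3$), and \emph{not} $4\pi(n\cdot\xi)^2/|\xi|^2$ as you wrote. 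For the present lemma the error is harmless --- the two symbols differ by a constant, i.e.\ by a multiple of the identity, and both are Mikhlin multipliers --- but the additive constant $-\tfrac{4\pi}{3}$ is precisely what produces the range $\widehat K\in[-\tfrac{4\pi}{3},\tfrac{8\pi}{3}]$, on which the thresholds $\lambda_1\ge\tfrac{4\pi}{3}\lambda_2$ and $\lambda_1\ge-\tfrac{8\pi}{3}\lambda_2$ in Theorem~\ref{theo:main} depend; with your formula those thresholds would come out wrong. You should therefore either correct the distributional identity as above, or bypass it entirely by the kernel-side Calder\'on--Zygmund argument (or the Riesz-transform identity $\mathcal{K}=-4\pi (R_n)^2-\tfrac{4\pi}{3}\mathrm{Id}$ with $R_n=\partial_n(-\Delta)^{-1/2}$), which you already mention as an alternative.
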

\begin{lemma}[Lemma~2.3 from \cite{CMS08}]\label{lem:2.5}
 Define the Fourier transform on the Schwartz space as
  \begin{equation*}
    \F u(\xi)\equiv \widehat u(\xi) =
    \int_{\R^3}e^{-ix\cdot \xi}u(x)d x,\quad u\in
    \Sch(\R^3).
  \end{equation*}
Then the Fourier transform of $K$ is given by
\begin{equation}\label{eq:2.5}
  \widehat K(\xi)= \frac{4\pi}{3} \(3\frac{\xi_3^2}{\lvert
  \xi\rvert^2}-1\)  =
  \frac{4\pi}{3} \( \frac{2\xi_3^2-\xi_1^2-\xi_2^2}{\lvert
  \xi\rvert^2}\)\in
\left[- \frac{4\pi}{3}, \frac{8\pi}{3}\right].
\end{equation}
\end{lemma}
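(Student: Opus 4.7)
The plan is to exploit the observation that, away from the origin, $K$ coincides with a second partial derivative of the Newton kernel: assuming (as in the statement) that the dipole axis is $n=e_3$, we have $\cos\theta=x_3/|x|$, so
\begin{equation*}
  K(x)=\frac{|x|^2-3x_3^2}{|x|^5}=-\partial_{x_3}^2\!\left(\frac{1}{|x|}\right)\quad\text{for }x\neq 0.
\end{equation*}
Since $1/|x|^3$ is not locally integrable, this pointwise identity has to be promoted to a distributional one, and a Dirac mass at the origin appears. Once that is established, $\widehat K$ is computed from the classical identity $\F(1/|x|)=4\pi/|\xi|^2$, which is valid with the convention in the statement because $-\Delta(1/(4\pi|x|))=\delta_0$.

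First I would check that the principal value $T:=\mathrm{p.v.}(3x_3^2-|x|^2)/|x|^5$ defines a tempered distribution. This rests on the spherical cancellation $\int_{S^2}(3\cos^2\theta-1)\,d\sigma=0$, which makes $\langle T,\varphi\rangle:=\lim_{\eps\to 0}\int_{|x|\ge\eps}\frac{3x_3^2-|x|^2}{|x|^5}\varphi(x)\,dx$ convergent for every $\varphi\in\Sch(\R^3)$. Then, pairing $\partial_{x_3}^2(1/|x|)$ against $\varphi$, integrating by parts twice on $\{|x|\ge\eps\}$ and letting $\eps\to 0$, the interior contributions yield $\langle T,\varphi\rangle$ while the boundary integrals on $\{|x|=\eps\}$ produce a multiple of $\varphi(0)$. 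In particular,
\begin{equation*}
  \partial_{x_3}^2\!\left(\frac{1}{|x|}\right)=T+c\,\delta_0\quad\text{in }\Sch'(\R^3),
\end{equation*}
for a constant $c$. Rather than computing the boundary term explicitly, I would identify $c$ by symmetry: $1/|x|$ is invariant under permutations of coordinates, so the analogous constants for $\partial_{x_1}^2$ and $\partial_{x_2}^2$ coincide with $c$; summing and using $\sum_j(3x_j^2-|x|^2)=0$ together with $\Delta(1/|x|)=-4\pi\,\delta_0$ gives $3c=-4\pi$, i.e.\ $c=-4\pi/3$. Hence
\begin{equation*}
  K=-\partial_{x_3}^2\!\left(\frac{1}{|x|}\right)-\frac{4\pi}{3}\,\delta_0\quad\text{in }\Sch'(\R^3).
\end{equation*}

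Applying $\F$ and using $\F(\partial_{x_3}^2 u)(\xi)=-\xi_3^2\,\F u(\xi)$ together with $\F(\delta_0)=1$, one obtains
\begin{equation*}
  \widehat K(\xi)=\xi_3^2\cdot\frac{4\pi}{|\xi|^2}-\frac{4\pi}{3}=\frac{4\pi}{3}\!\left(\frac{3\xi_3^2}{|\xi|^2}-1\right)=\frac{4\pi}{3}\!\left(\frac{2\xi_3^2-\xi_1^2-\xi_2^2}{|\xi|^2}\right),
\end{equation*}
as claimed; the inclusion $\widehat K(\R^3\setminus\{0\})\subset[-4\pi/3,\,8\pi/3]$ is immediate from $\xi_3^2/|\xi|^2\in[0,1]$.

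The main obstacle is the first step: one must justify carefully that $\partial_{x_3}^2(1/|x|)$, computed distributionally, splits as a principal-value part plus a point mass, and that the principal value is a well-defined tempered distribution on all of $\Sch(\R^3)$. The symmetry argument above is a clean way to bypass the explicit surface computation on small spheres that produces the coefficient of $\delta_0$, but it presupposes the decomposition with \emph{some} constant $c$, which is where the integration-by-parts plus angular-cancellation argument is still needed.
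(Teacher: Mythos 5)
This lemma is quoted verbatim from \cite{CMS08} and the present paper gives no proof of it, so there is nothing internal to compare your argument against; judged on its own, your proof is correct. The pointwise identity $K=-\partial_{x_3}^2(1/|x|)$ for $x\neq 0$, the promotion to the distributional identity $\partial_{x_3}^2(1/|x|)=T+c\,\delta_0$ with $T$ the principal value (well defined thanks to the vanishing spherical mean of $3\cos^2\theta-1$), the determination $c=-4\pi/3$ by summing over the three coordinate directions and using $\Delta(1/|x|)=-4\pi\delta_0$, and the final Fourier computation with $\F(1/|x|)=4\pi/|\xi|^2$ and $\F(\delta_0)=1$ all check out and are consistent with the stated convention $\widehat u(\xi)=\int e^{-ix\cdot\xi}u(x)\,dx$. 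Two small points worth making explicit if you write this up: (i) the symmetry argument identifies $c$ only once you know the difference $\partial_{x_j}^2(1/|x|)-T_j$ is a \emph{multiple of} $\delta_0$ rather than a combination of derivatives of $\delta_0$; this follows either from the explicit boundary term $\eps^{-4}\int_{|x|=\eps}x_3^2\varphi\,d\sigma\to \frac{4\pi}{3}\varphi(0)$ in your integration by parts, or from a homogeneity argument (both sides are homogeneous of degree $-3$, and the only such distributions supported at the origin in $\R^3$ are multiples of $\delta_0$); (ii) an alternative, and the route often taken in the literature, is to observe that $2x_3^2-x_1^2-x_2^2$ is a harmonic polynomial of degree $2$ and to invoke the Hecke--Bochner/Stein formula for the Fourier transform of $\mathrm{p.v.}\,P_k(x)/|x|^{k+n}$, which yields $\widehat K(\xi)=\frac{4\pi}{3}\,(2\xi_3^2-\xi_1^2-\xi_2^2)/|\xi|^2$ directly; your derivation is more elementary and self-contained, at the cost of the careful handling of the $\delta_0$ contribution. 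The range $[-\frac{4\pi}{3},\frac{8\pi}{3}]$ is immediate from $\xi_3^2/|\xi|^2\in[0,1]$, as you say.
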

Using Fourier transform and Plancherel's Theorem, we can rewrite the
energy functional as
\begin{equation}\label{eq:2.6}
  E(u)= \frac{1}{2}|\nabla u|^2_2 + \frac{1}{2}|xu|^2_2
+ \frac{1}{2}\int_{\R^3}\(\lambda_1+\lambda_2
\widehat{K}(\xi)\)|\widehat{\rho}(\xi)|^2 d\xi,
\end{equation}
 where $\rho(x) = |u(x)|^2$.
\smallbreak

Since $V(x)\to +\infty$ as $|x|\to\infty$, we have the standard result:
\begin{lemma}
  For all $p\in [2,6)$,  the embedding $\Sigma \hookrightarrow
  L^p(\R^3)$ is compact.
\end{lemma}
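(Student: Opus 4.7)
The plan is the standard weighted-Sobolev compactness argument: take a bounded sequence in $\Sigma$, extract a weak limit, and combine local Rellich compactness with a tail estimate furnished by the weight $|x|^2$.

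First I would let $(u_n)$ be a bounded sequence in $\Sigma$, say $\|u_n\|_\Sigma \le M$. Since $\Sigma$ is a Hilbert space (with the natural inner product inherited from $H^1$ and the weighted $L^2$), I can pass to a subsequence such that $u_n \rightharpoonup u$ weakly in $\Sigma$, and in particular $u\in\Sigma$ with $\|u\|_\Sigma\le M$. The goal is to upgrade this to strong convergence in $L^p$ for $p\in[2,6)$.

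Next I would handle the localized piece: for every fixed $R>0$, the classical Rellich--Kondrachov theorem gives that $H^1(B_R)\hookrightarrow L^p(B_R)$ is compact for $p\in[1,6)$ (in dimension $3$). Since $(u_n)$ is bounded in $H^1(B_R)$, a diagonal extraction yields $u_n\to u$ strongly in $L^p(B_R)$ for every $R\in\N$ and every $p\in[2,6)$.

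The key new ingredient is the tail estimate supplied by the weight: for any $R>0$,
\begin{equation*}
\int_{|x|>R}|u_n(x)|^2\,dx \le \frac{1}{R^2}\int_{\R^3}|x|^2|u_n(x)|^2\,dx \le \frac{M^2}{R^2},
\end{equation*}
and likewise for $u$. Combining this with the local strong convergence, choosing $R$ large and then $n$ large settles the case $p=2$. For $p\in(2,6)$, I would use the Gagliardo--Nirenberg/Sobolev interpolation
\begin{equation*}
\|u_n-u\|_{L^p(|x|>R)} \le \|u_n-u\|_{L^2(|x|>R)}^{\theta}\,\|u_n-u\|_{L^6(|x|>R)}^{1-\theta},
\end{equation*}
with $\theta\in(0,1]$ defined by $\frac1p=\frac{\theta}{2}+\frac{1-\theta}{6}$. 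The $L^6$ factor is controlled uniformly in $n$ by the Sobolev embedding $H^1(\R^3)\hookrightarrow L^6(\R^3)$ applied to the bounded sequence $(u_n)$, while the $L^2$ factor is small by the tail estimate. Together with the $L^p(B_R)$ convergence this yields $u_n\to u$ strongly in $L^p(\R^3)$.

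The only mildly delicate point is marrying the two regimes cleanly—one fixes $\eps>0$, picks $R$ so large that the tail contribution in the relevant $L^p$ norm (via the interpolation above) is below $\eps/2$ uniformly in $n$, and then picks $n$ large so that the Rellich piece on $B_R$ is also below $\eps/2$. No step presents a genuine obstacle; this is essentially bookkeeping once the tail bound $\int_{|x|>R}|u_n|^2\le M^2/R^2$ is written down.
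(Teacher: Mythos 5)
Your proof is correct and is exactly the standard argument the paper has in mind: the paper states this lemma without proof, merely noting it follows from $V(x)\to+\infty$ as $|x|\to\infty$, and your combination of local Rellich--Kondrachov compactness with the tail bound $\int_{|x|>R}|u_n|^2\le M^2/R^2$ and H\"older/Sobolev interpolation for $p\in(2,6)$ is precisely that standard proof. Nothing is missing.
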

Proceeding as in \cite{HaSt04}, we have:
\begin{lemma}\label{lem:2.3}
  \begin{enumerate}
 \item The energy functional $E$ and $\widetilde{E}$ are $C^1$ on
 $\Sigma$ and $\widetilde\Sigma$, respectively.
 \item The mapping $c \mapsto I_c$ is continuous.
\end{enumerate}
\end{lemma}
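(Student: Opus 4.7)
The plan is to address the two parts separately. For the $C^1$ regularity in part (1), I would split the energy as $E = E_1 + E_2 + E_3 + E_4$, corresponding to the kinetic, confining, local cubic and dipolar terms. The first two are continuous symmetric quadratic forms on $H^1$ and $\Sigma$, respectively, hence smooth with derivatives equal to the associated bilinear forms. For $E_3(u) = \tfrac{\lambda_1}{2}|u|_4^4$, the compact embedding $\Sigma \hookrightarrow L^4$ from the preceding lemma reduces matters to the standard fact that $u \mapsto |u|_4^4$ is $C^1$ on $L^4$, with derivative $h \mapsto 2\lambda_1\RE \int_{\R^3} |u|^2 u\bar h\,dx$.

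For the dipolar term $E_4$, Plancherel together with the bound $\widehat K \in L^\infty$ from Lemma~\ref{lem:2.5} shows that $E_4$ extends to a continuous quartic form on $L^4(\R^3)$, and its Gateaux derivative at $u$ is $h\mapsto 2\lambda_2\RE\int_{\R^3} (K\ast|u|^2)u\bar h\,dx$. Continuity of this derivative as a function of $u\in\Sigma$ follows from the boundedness of $\mathcal{K}$ on $L^2$ (Lemma~\ref{lem:2.4} with $p=2$), combined with H\"older's inequality and the embedding $\Sigma\hookrightarrow L^4$. The argument for $\widetilde E$ on $\widetilde\Sigma$ is identical component by component.

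For the continuity of $c\mapsto I_c$ in part (2), my plan is a dilation argument. The upper bound $\limsup_{c'\to c}I_{c'}\le I_c$ follows from taking a near-minimizer $u\in S_c$ with $E(u)\le I_c+\eps$ and scaling to $v_{c'}=(c'/c)u\in S_{c'}$: each term of $E$ is homogeneous (quadratic or quartic) in $u$, so $E(v_{c'})\to E(u)$ as $c'\to c$. For the lower bound, the key input is coercivity of $E$ on $S_c$ in the regime \eqref{eq:1.12}: the Fourier representation \eqref{eq:2.6} combined with \eqref{eq:2.5} gives $\lambda_1+\lambda_2\widehat K(\xi)\ge 0$ almost everywhere, and hence $E(u)\ge\tfrac12|\nabla u|_2^2+\tfrac12|xu|_2^2$. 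Given $c_n\to c$ and near-minimizers $u_n\in S_{c_n}$ with $E(u_n)\le I_{c_n}+1/n$, the upper semicontinuity already established bounds $E(u_n)$ from above, whence $u_n$ is bounded in $\Sigma$ (using also $|u_n|_2^2=c_n^2$ bounded). Rescaling $v_n=(c/c_n)u_n\in S_c$ and using the $\Sigma$-bound to estimate each term, one obtains $E(v_n)-E(u_n)\to 0$, so $I_c\le\liminf E(v_n)=\liminf E(u_n)\le\liminf I_{c_n}$. In the complementary regime \eqref{eq:1.13}, Theorem~\ref{theo:main} gives $I_c=-\infty$ for every $c>0$, so continuity in the extended sense is automatic.

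The main obstacle is the lower semicontinuity step: it hinges on a uniform $\Sigma$-bound for minimizing sequences, which is exactly where the stability condition \eqref{eq:1.12} intervenes, via the sign of the Fourier symbol of $K$ together with the confining effect of $|x|^2/2$. Everything else in the proof is routine homogeneity plus Plancherel.
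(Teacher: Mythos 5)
Your proof is correct. Note that the paper itself gives no argument for this lemma at all --- it simply writes ``Proceeding as in \cite{HaSt04}'' --- so there is nothing internal to compare against; your write-up is essentially the standard argument that citation points to: term-by-term $C^1$ differentiability using Plancherel and the $L^\infty$ bound on $\widehat K$ for the nonlocal term, and continuity of $c\mapsto I_c$ by the dilation $u\mapsto (c'/c)u$ combined with the coercivity $E(u)\ge \tfrac12|\nabla u|_2^2+\tfrac12|xu|_2^2$ coming from $\lambda_1+\lambda_2\widehat K\ge 0$ under \eqref{eq:1.12}. The only cosmetic remark is that in part (1) you invoke the \emph{compact} embedding $\Sigma\hookrightarrow L^4$ where mere continuity of the embedding suffices; this does not affect correctness.
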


\subsection{Cauchy problem}
\label{sec:cauchy}

We shall consider the initial value problem \eqref{eq:1.5} in two
situations: either $\psi$ is a scalar function, or
$\psi=(\psi_1\psi_2)$ is a vector function. In the second case,
\eqref{eq:1.5} means
\begin{equation*}
    i\partial_t \psi_j + \frac{1}{2} \Delta \psi_j = \frac{|x|^2}{2} \psi_j +
\lambda_1\(|\psi_1|^2+|\psi_2|^2\) \psi_j + \lambda_2 \(K \ast
\(|\psi_1|^2+|\psi_2|^2\)\)\psi_j,\ \  j=1,2,
\end{equation*}
along with the initial condition $\psi(0,x)= \psi_0(x)$. The main
technical remark concerning the Cauchy problem for \eqref{eq:1.5},
made in \cite{CMS08}, is that in view of Lemma~\ref{lem:2.4}, the
operator $u\mapsto (K\ast |u|^2)u$ is continuous from $L^4(\R^3)$ to
$L^{4/3}(\R^3)$. Therefore, on a technical level, it is not really
different from considering a cubic nonlinearity, for which the local
existence theory at the level of $\Sigma$ follows from Strichartz
inequalities and a fixed point argument (see
e.g. \cite{CazCourant}). Note
that because of the presence of the harmonic potentiel, working in
$H^1(\R^3)$ is not enough to ensure local well-posedness: working in
$\Sigma$ is necessary if one wants to consider a solution which
remains in $H^1(\R^3)$ (\cite{Ca08}).
Standard arguments (which can also be found
in \cite{CazCourant}) imply the conservations of mass and energy.
\begin{proposition}
  Let $\lambda_1,\lambda_2\in \R$, and $\psi_0\in\Sigma$. There exists
  $T$, depending on $\|\psi_0\|_{\Sigma}$ and a unique solution
  \begin{equation*}
    \psi\in C([-T,T];\Sigma), \quad \text{with }\psi,x\psi,\nabla
    \psi\in L^{8/3}\([-T,T];L^4(\R^3)\)
  \end{equation*}
to \eqref{eq:1.5}. The following quantities are conserved by the flow:
\begin{align*}
  &\text{Mass: }& |\psi(t)|_2=|\psi_0|_2,\quad \forall t\in [-T,T].\\
&\text{Energy: }& E\(\psi(t)\)=E(\psi_0), \quad \forall t\in [-T,T].
\end{align*}
In particular, if $\lambda_1\ge \frac{4\pi}{3}\lambda_2\ge 0$ or if
$\lambda_1\ge -\frac{8\pi}{3}\lambda_2>0$, then
$T$ can be chosen arbitrarily large, and the solution is defined for
all time. \\
If $\psi_0\in \widetilde \Sigma$, the above conclusions remain true,
up to replacing $E$ with $\widetilde E$, along with other obvious
modifications.
\end{proposition}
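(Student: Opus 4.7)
The result is standard NLS Cauchy theory in $\Sigma$: Strichartz estimates for the harmonic-oscillator propagator, a contraction mapping argument for local existence, and a global-in-time extension via a conserved-energy a priori bound. The only deviation from the standard cubic NLS setup is the presence of the dipolar nonlinearity; by Lemma~\ref{lem:2.4}, however, the map $u\mapsto (K\ast|u|^2)u$ enjoys the same $L^4\to L^{4/3}$ mapping property as $u\mapsto |u|^2u$, and can be handled on the same footing. The plan thus follows \cite{CMS08} for the nonlinear estimates and \cite{CazCourant,Ca08} for the $\Sigma$-level framework.

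For local existence I would recast \eqref{eq:1.5} in Duhamel form with the harmonic-oscillator propagator $U(t)=e^{-itH}$, $H=-\tfrac12\Delta+\tfrac12|x|^2$, and use the Mehler formula to obtain the usual Strichartz estimates for $U(t)$ on bounded time intervals---in particular for the admissible pair $(q,r)=(8/3,4)$ in three dimensions. To propagate $\Sigma$-regularity I would use the Heisenberg-type operators
\begin{equation*}
  A_j(t)=U(t)\,x_j\,U(-t),\qquad B_j(t)=U(t)\,\partial_j\,U(-t),
\end{equation*}
which commute with the free flow and satisfy the same Strichartz estimates as $x_j,\partial_j$; this reduces control of the $\Sigma$-norm of the solution to Strichartz-norm control of $A_j(t)\psi$ and $B_j(t)\psi$ and their images under Duhamel. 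H\"older gives $\||u|^2u\|_{L^{4/3}}\le |u|_4^3$, and Lemma~\ref{lem:2.4} yields the analogous bound for the dipolar nonlinearity; the same argument controls the $x$- and $\nabla$-derivatives of both nonlinearities. A contraction mapping argument on a small ball of
\begin{equation*}
  Y_T=\bigl\{\psi:\; \psi,\,x\psi,\,\nabla\psi\in C([-T,T];L^2)\cap L^{8/3}([-T,T];L^4)\bigr\}
\end{equation*}
then produces a unique local solution, with $T$ depending only on $\|\psi_0\|_\Sigma$; uniqueness across the whole class follows from a Gronwall bound on the difference of two solutions.

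Mass and energy conservation are obtained by the usual regularization argument: approximate $\psi_0$ by a sequence of smoother data for which the identities follow from direct integration by parts, and pass to the limit using the continuous dependence provided by the fixed-point argument. For global existence in the stable regime, the Fourier rewriting \eqref{eq:2.6} together with the range of $\widehat K$ from Lemma~\ref{lem:2.5} gives $\lambda_1+\lambda_2\widehat K(\xi)\ge 0$, and hence $E(\psi)\ge \tfrac12|\nabla\psi|_2^2+\tfrac12|x\psi|_2^2$; combined with mass conservation this yields a uniform bound $\|\psi(t)\|_\Sigma^2\le 2E(\psi_0)+|\psi_0|_2^2$, so the local existence result iterates to all of $\R$. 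The vector-valued statement is proved in the same way, since the coupling is only through the common density $|\psi_1|^2+|\psi_2|^2$ and all the above estimates carry over componentwise.

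The main technical obstacle---mild in this context---is the handling of the moment operator $x_j$ against the nonlocal dipolar term, since the commutator $[x_j,K\ast]$ is a Fourier multiplier with a symbol of degree $-1$. Following \cite{CMS08}, one avoids this by distributing $x_j$ inside $|u|^2=u\bar u$ so that $x_j$ only ever falls on factors of $u$ (producing $xu\in L^2$), after which the $L^p$-boundedness of $K\ast$ from Lemma~\ref{lem:2.4} absorbs the remainder; with this in hand, every remaining step is routine.
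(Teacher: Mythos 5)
Your proposal follows exactly the route the paper intends: the paper itself only sketches this proposition by observing (via Lemma~\ref{lem:2.4}) that $u\mapsto (K\ast|u|^2)u$ maps $L^4$ to $L^{4/3}$ like the cubic term, then invoking the Strichartz/fixed-point theory of \cite{CMS08,CazCourant,Ca08} in $\Sigma$ for local well-posedness and conservation laws, and the sign condition $\lambda_1+\lambda_2\widehat K(\xi)\ge 0$ from \eqref{eq:2.6} and Lemma~\ref{lem:2.5} for the global bound. Your write-up is a correct and more detailed rendering of the same argument (the only quibble being that multiplication by $x_j$ commutes with the outer factor of the nonlinearity directly, so no distribution inside the convolution is needed for the moment estimates).
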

\subsection{Stability}
\label{sec:stab}

For a fixed $c > 0$, we use the following definition of
stability introduced by Cazenave and Lions \cite{CaLi82}.
\begin{definition}\label{def:2.1}
   The set $Z_c$ is said to be stable if $Z_c \neq
\emptyset$ and:  \\
For all $w \in Z_c$ and $\varepsilon > 0$, there exists $\delta >
0$ such that for all $\psi_0 \in  \widetilde\Sigma$, we have
$$\|\psi_0 - w\|_{\widetilde\Sigma} < \delta \Rightarrow \inf_{w \in
Z_c}\|\psi(t,x)-w\|_{\widetilde\Sigma} < \varepsilon,$$
  where
$\psi(t,x)$ is the unique solution of \eqref{eq:1.5}, corresponding to the
initial data $\psi_0$.
\end{definition}
Notice that
if $w \in Z_c$,
then there exists a Lagrange multiplier $\lambda \in \R$
such that
\begin{equation*}
  - \frac{1}{2} \Delta w + \frac{|x|^2}{2}w + \lambda_1 |w|^2 w + \lambda_2
(K \ast |w|)^2 w + \lambda w = 0.
\end{equation*}
Therefore $w =
(w_1,w_2)$ solves the following elliptic system
\begin{equation}
  \label{eq:2.4}
  \left\{
    \begin{aligned}
      - \frac{1}{2} \Delta w_1 + \frac{1}{2} |x|^2 w_1 + \lambda_1 |w|^2
w_1 + \lambda_2 (K \ast |w|^2) w_1 + \lambda w_1 &= 0,\\
- \frac{1}{2} \Delta w_2 + \frac{1}{2}|x|^2 w_2 + \lambda_1 |w|^2
w_2 + \lambda_2 (K \ast |w|^2) w_2 + \lambda w_2 &= 0.
    \end{aligned}
\right.
\end{equation}

 \section{Proof of Theorem~\ref{theo:main}}
\label{sec:proof}
 Let us first prove part A).
 Thanks to \eqref{eq:2.6}, the minimization problem \eqref{eq:1.9} can
 be rewritten in
 the following manner
 $$I_c = \inf \left\{\frac{1}{2} (|\nabla u|^2_2 + |xu|^2_2 + \int_{\R^3}
 (\lambda_1 + \lambda_2 \widehat{K}(\xi))|\widehat{\rho}(\xi)|^2d\xi ; u \in
 S_c\right\}.$$
 Now in view of \eqref{eq:2.5} and \eqref{eq:1.12}, $E(u) \ge 0$ for
 any $u \in S_c$.

 Let $\{u_n\} \subset \Sigma$ be such that $|u_n|^2_2 \to c^2$
 and $\displaystyle{\lim_{n\to \infty}}E(u_n) = I_c$. the
 above property implies that $(u_n)$ is bounded in $\Sigma$,
 therefore, we can suppose (up to a subsequence) that $u_n
 \rightharpoonup u$ in $\Sigma$. On the other hand, by the lower
 semi-continuity of the norm, we certainly have
 \begin{align}
   \label{eq:3.3}
   |xu|^2_2 + |\nabla u|^2_2 &\le \liminf |\nabla u_n|^2_2 + |x
   u_n|^2_2,\\
\label{eq:3.4}
\int_{\R^3} |u_n|^4 &\Tend n \infty \int_{\R^3} |u|^4.
 \end{align}
 Finally using Lemma~\ref{lem:2.4}, we obtain that
 \begin{equation}
   \label{eq:3.5}
   \int_{\R^3}(K \ast |u_n|^2)
 u^2_n \to \int_{\R^3}
 (K \ast |u|^2)u^2.
 \end{equation}
Relations \eqref{eq:3.3}, \eqref{eq:3.4} and \eqref{eq:3.5}
 imply that
$$E(u) \le \liminf E(u_n) \to I_c.$$
We  conclude that $E(u) = I_c$,  since $|u|_2 = c$.
\smallbreak

Now, taking into account the fact that $|\nabla|u||_2 \le |\nabla
u|_2$, we have $E(|u|) \le E(u)$ for any $u \in H^1$.
Finally, using rearrangement inequalities established by F. Brock
\cite{Br95}, we certainly get that $E(|u|^\#) \le E(|u|) \le E(u)$,
where $u^\#$ stands for the Steiner symmetrization with respect to the
$x_3$-axis.
\smallbreak

As proved in (\cite[Lemma 2.1]{BCW10}), the energy $E$ is strictly
convex, and therefore  the minimizer constructed above is unique.
\begin{remark}\label{rem:3.1}
  All minimizing sequences of \eqref{eq:1.9} are relatively
compact in $\Sigma$.
\end{remark}
Now, let us prove part B) of Theorem~\ref{theo:main}.  To reach this goal, we
need to construct an appropriate sequence of functions ensuring that
$I_c = - \infty$. In doing so, we fix a flaw in the proof of
\cite{BCW10}.
Let $f_1 \in C^\infty_0(\R^2)$ and $ f_2 \in
C^\infty_0(\R)$ be such that
\begin{equation*}
\int_{\R^3}f_1(x_1,x_2)^2 f_2(x_3)^2dx=  \(\int_{\R^2} f_1^2\)\(\int_\R f_2^2\) = c^2.
\end{equation*}
At this stage, the idea is to use anisotropy.
For $\eps,h>0$ to be made precise later, let
\begin{equation*}
  u(x) =
\frac{1}{\varepsilon} f_1\(\frac{x_1}{\varepsilon},
\frac{x_2}{\varepsilon}\) \frac{1}{\sqrt{h}}
f_2\(\frac{x_3}{h}\),\quad x\in \R^3.
\end{equation*}
Then $u \in S_c$. For $\rho=|u|^2$, we have
$$\widehat{\rho}(\xi) = \F\(|f_1|^2\)
(\varepsilon \xi_1, \varepsilon\xi_2) \F\(|f_2|^2\) (h\xi_3),\quad
\xi \in \R^3.$$
We now measure the order of magnitude, as $\varepsilon, h\to
0$ of each term in the energy, leaving out the precise value of positive
multiplicative constants. We obviously have
\begin{equation*}
  {\int}|\nabla u|^2 \approx
\frac{1}{\varepsilon^2} + \frac{1}{h^2}\quad \text{and}\quad
{\int}|x|^2|u|^2 \approx \varepsilon^2 +
h^2.
\end{equation*}
 Let $w(\xi) = \lambda_1+\lambda_2 \widehat{K}(\xi)$, $\varphi =
\left|\F\(|f_1|^2\)\right|^2$ and
$\psi = \left|\F\(|f_2|^2\)\right|^2$. Then
\begin{align*}
  \int_{\R^3}w(\xi)
|\widehat{\rho}(\xi)|^2 d\xi& = \int_{\R^3}w(\xi)
\varphi (\varepsilon \xi_1, \varepsilon \xi_2)\psi(h \xi_3)d\xi\\
&= \frac{1}{\varepsilon^2h} \int w \(\frac{\eta_1}{\varepsilon},
\frac{\eta_2}{\varepsilon},
\frac{\eta_3}{\varepsilon}\)\varphi(\eta_1,\eta_2)
\psi(\eta_3)d\eta.
\end{align*}
Now since $w$ is homogeneous of degree $0$,
\begin{equation*}
  w\(\frac{\eta_1}{\varepsilon}, \frac{\eta_2}{\varepsilon},
\frac{\eta_3}{\varepsilon}\) = \lambda_1 + \frac{4\pi}{3} \lambda_2 \;
\frac{2\varepsilon^2
\eta^2_3-h^2\eta^2_1-h^2\eta^2_2}{h^2\eta^2_1 + h^2\eta^2_2+
\varepsilon^2 \eta^2_3}.
\end{equation*}
If $h/ \eps\to +\infty$, then
$$w\(\frac{\eta_1}{\varepsilon},
\frac{\eta_2}{\varepsilon} , \frac{\eta_3}{\varepsilon}\)
\Tend {\eps,h} 0\lambda_1 - \frac{4}{3} \pi \lambda_2.$$
Now using the fact that
$\lambda_1 < \frac{4}{3} \pi \lambda_2$, $\varphi$ and $\psi$ are
non-negative functions, we certainly have that
$$\int w (\xi) |\widehat{\rho}(\xi)|^2 d\xi \approx - \frac{1}{\varepsilon^2h}
.$$
Finally, taking $h =
\sqrt{\varepsilon}$ and letting $\varepsilon$ tend to zero, we get
that $I_c = - \infty$.

\section{Stability of standing Waves}
\label{sec:stability}
In this section, we assume that
\begin{equation*}
  \lambda_1\ge \frac{4\pi}{3}\lambda_2>0.
\end{equation*}
\begin{theorem}\label{theo:stability}
The following properties hold:
\begin{itemize}
  \item[i)] For any $c > 0, I_c = \tilde{I}_c$, $Z_c \neq \emptyset$
and $Z_c$ is orbitally stable.
\item[ii)] For any $z \in Z_c, |z| \in W_c$.
\item[iii)] $Z_c = \{e^{i\theta}w, \theta \in \R\}$ where
$w$ is the unique minimize of (1.9).
\end{itemize}
\end{theorem}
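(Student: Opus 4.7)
The plan is to reduce the vectorial problem to the scalar setting of Theorem~\ref{theo:main}.A via a \emph{diamagnetic-type} comparison, combine this with the standard Cazenave--Lions compactness argument for orbital stability, and then use the equality cases of the same comparison together with uniqueness of the scalar minimizer to characterize $Z_c$.

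Concretely, I would first establish $I_c=\widetilde I_c$. For any $u\in S_c$, the pair $(u,0)\in\widetilde S_c$ has identical energy, so $\widetilde I_c\le I_c$. Conversely, given $w=(w_1,w_2)\in\widetilde S_c$, set $u=\sqrt{w_1^2+w_2^2}\in S_c$; all terms in $E(u)$ depend only on $|u|^2=w_1^2+w_2^2$ and coincide with their counterparts in $\widetilde E(w)$, while the pointwise identity $u\nabla u=w_1\nabla w_1+w_2\nabla w_2$ combined with Cauchy--Schwarz yields $|\nabla u|_2^2\le|\nabla w_1|_2^2+|\nabla w_2|_2^2$. Hence $E(u)\le\widetilde E(w)$ and $I_c\le\widetilde I_c$. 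Lifting the scalar minimizer $w$ furnished by Theorem~\ref{theo:main}.A as $(w,0)$ then produces an element of $Z_c$, so $Z_c\neq\emptyset$.

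For orbital stability I would argue by contradiction. Assuming sequences $\psi_{0,n}\to w_n\in Z_c$ in $\widetilde\Sigma$ and times $t_n$ with $\inf_{w\in Z_c}\|\psi_n(t_n)-w\|_{\widetilde\Sigma}\ge\varepsilon_0>0$, conservation of mass and energy imply that $v_n:=\psi_n(t_n)$ is a minimizing sequence for $\widetilde I_c$. The key step, which I expect to be the main workhorse, is the vectorial analogue of Remark~\ref{rem:3.1}: every such sequence is relatively compact in $\widetilde\Sigma$. I would reproduce the argument of Theorem~\ref{theo:main}.A componentwise, using the boundedness of $v_n$ in $\widetilde\Sigma$, the compact embedding $\Sigma\hookrightarrow L^p(\R^3)$ for $p\in[2,6)$ to pass to the limit in the nonlinear and dipolar terms (via Lemma~\ref{lem:2.4}), and the weak lower semicontinuity of $|\nabla\cdot|_2^2+|x\cdot|_2^2$. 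The limit must lie in $Z_c$, contradicting the escape assumption.

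Part (ii) is then almost immediate: if $z\in Z_c$, the diamagnetic comparison above gives $E(|z|)\le\widetilde E(z)=\widetilde I_c=I_c$ with $\||z|\|_2=c$, so $|z|$ is a scalar minimizer. Elliptic regularity applied to the Euler--Lagrange equation~\eqref{eq:1.6} gives $|z|\in C^1(\R^3)$, and the strong maximum principle (noting $|z|\ge 0$ is nontrivial) yields $|z|>0$, so $|z|\in W_c$. For part (iii), the hard part is controlling the phase of $z$. Uniqueness in Theorem~\ref{theo:main}.A forces $|z|=w$. Identifying $\widetilde\Sigma$ with complex-valued $\Sigma$ via $(z_1,z_2)\mapsto z_1+iz_2$ and using $w>0$ everywhere to write $z=we^{i\vartheta(x)}$, the decomposition $|\nabla z|^2=|\nabla w|^2+w^2|\nabla\vartheta|^2$ combined with the equality $E(w)=\widetilde E(z)$ forces $w^2|\nabla\vartheta|^2\equiv 0$, so $\vartheta$ reduces to a constant $\theta\in\R$ and $z=e^{i\theta}w$. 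The reverse inclusion is immediate, giving $Z_c=\{e^{i\theta}w:\theta\in\R\}$.
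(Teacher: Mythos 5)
Your proposal is correct and follows the same overall strategy as the paper (Cazenave--Lions compactness, the diamagnetic-type identity $\widetilde E(z)-E(|z|)=\tfrac12\int |z|^2|\nabla\vartheta|^2\ge 0$, and uniqueness of the scalar minimizer), but it differs in two places in a way worth noting. For the compactness step you work directly in $\widetilde\Sigma$: weak lower semicontinuity of the quadratic terms plus the compact embedding $\Sigma\hookrightarrow L^p$, $p\in[2,6)$, applied componentwise, shows the weak limit is already a minimizer of $\widetilde E$ and that the norms converge. The paper instead routes the argument through the scalar problem: it sets $\rho_n=|z_n|$, shows $\rho_n$ is a minimizing sequence for $I_c$ (this is also how it obtains $I_c=\widetilde I_c$, which you prove separately and more cleanly beforehand), invokes Remark~\ref{rem:3.1} to get $\rho_n\to\rho$ in $\Sigma$, and then transfers the convergence back to $z_n$ via the identity $\lim\|\nabla z_n\|_2^2=\lim|\nabla\rho_n|_2^2$. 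Your direct route is shorter and avoids the somewhat delicate reconstruction of $z$ from $\rho$; the paper's route has the advantage of producing the statement $|z|\in W_c$ of part ii) as a byproduct. For part iii) you use the polar decomposition $z=we^{i\vartheta}$ (legitimate since $w>0$, $z\in C^1$ and $\R^3$ is simply connected) and conclude $\nabla\vartheta\equiv 0$ in one stroke; the paper instead shows the set $\{u=0\}$ is open and closed and that $\nabla(u/v)=0$ where $v\neq 0$, which requires a small case analysis that your argument bypasses. Two implicit points you should make explicit in a write-up: the reduction from "perturbed initial data" to "minimizing sequence at time $t_n$" uses the continuity of $c\mapsto I_c$ (Lemma~\ref{lem:2.3}), since $\|\psi_{0,n}\|_2$ is only close to $c$; and the positivity $|z|>0$ in part ii) needs the Euler--Lagrange equation for $|z|$ before the strong maximum principle can be applied.
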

\begin{proof}
   We follow the approach presented in \cite{CaLi82} and resumed in
\cite{HaSt04}. In fact to prove the stability, it suffices to show
that $Z_c \neq \emptyset$ and any minimizing sequence $\{z_n\}
\subset \widetilde{\Sigma}$ such that $\|z_n\|_2 \to c$ and
$\widetilde{E}(z_n)\to \widetilde{I}_c$ is relatively compact in
$\widetilde \Sigma$. \smallbreak

Let $z_n = (u_n, v_n) \subset \widetilde{\Sigma}$ be a sequence such
that $\|z_n\|_2 \to c$ and $\widetilde{E}(z_n)
\to \widetilde{I}_c$.\\
The first step consists in proving that $\{z_n\}$ has a subsequence
which is convergent in $\widetilde{\Sigma}$.

By the fact that $\widetilde{E}$ is a non-negative functional, we
can easily deduce that $\{z_n\}$ is bounded in $\widetilde{\Sigma}$,
therefore passing to a subsequence, one can suppose that
$$z_n \rightharpoonup z = (u,v)\quad \mbox{ in } \widetilde{\Sigma},$$
hence
$$u_n \rightharpoonup u\quad \mbox{ in } \Sigma\quad
\text{and}\quad v_n \rightharpoonup v\quad \mbox{ in }\Sigma,$$
and
\begin{equation}
  \label{eq:4.2}
  \lim_{n\to \infty} \int|\nabla u_n|^2 + |\nabla v_n|^2\quad
\mbox{ exists}.
\end{equation}
Now let $\rho_n = |z_n| = (u^2_n +
v^2_n)^{1/2}$. Clearly $\{\rho_n\} \subset \Sigma$ and for all $n \in
\mathbb{N}$ and $1 \le j \le 3$,
\begin{equation*}
\partial_j \rho_n(x) =
\left\{
\begin{aligned}
& \frac{u_n(x)\partial_j
u_n(x)+v_n(x)\partial_jv_n(x)}{\(u^2_n(x)+v^2_n(x)\)^{1/2}} & \quad
\text{if } u^2_n + v^2_n > 0, \\
&0 &\quad \text{otherwise.}
\end{aligned}
\right.
\end{equation*}
Thus
$$\widetilde{E}(z_n)-E(\rho_n) = \frac{1}{2}
\sum^3_{j=1}\int_{\{u^2_n+ v^2_n > 0\}}  \frac{\(u_n \partial_j
v_n-v_n\partial_j u_n\)^2}{u^2_n + v^2_n}dx.$$
 Therefore
$\widetilde{I}_c = \lim \widetilde{E}(z_n) \ge \limsup E(\rho_n)$.
Since
\begin{equation}
  \label{eq:4.3}
  \|z_n\|^2_2 = |\rho_n|^2_2 = c^2_n \to c^2,
\end{equation}
we get by Lemma~\ref{lem:2.3} that
$$\liminf E(\rho_n) \ge \liminf I_{c_n} \ge I_c \ge \widetilde{I}_c,$$
and hence
\begin{equation}
  \label{eq:4.4}
  \lim_{n\to \infty} E(\rho_n) =
\lim_{n\to + \infty} \widetilde{E}(z_n) = I_c =
\widetilde{I_c}.
\end{equation}
On the other hand \eqref{eq:4.2} implies that
$$\lim_{n\to \infty} \int_{\R^3}
\(|\nabla u_n|^2 + |\nabla v_n|^2 - |\nabla (u^2_n + v^2_n)^{1/2}|^2
\)dx = 0.$$
Consequently
\begin{equation}
  \label{eq:4.5}
  \lim_{n\to  \infty}
\int |\nabla u_n|^2 +  |\nabla v_n|^2 dx = \lim_{n\to
\infty} \int |\nabla(u^2_n+v^2_n)^{1/2}|dx = \lim_{n\to
\infty} \int|\nabla \rho_n|^2dx.
\end{equation}
We infer from \eqref{eq:4.3}, \eqref{eq:4.4} and
Remark~\ref{rem:3.1}  that there exists $\rho \in \Sigma$ such that
$\rho_n \to \rho$ in  $\Sigma$.

Clearly $\rho \in S_c$ and $E(\rho) = I_c$. Then $\rho \ge 0$ and
Steiner symmetric.
Moreover $\rho$ is a weak solution of \eqref{eq:1.6}.
 Thus $\rho \in
C^1(\R^3)$ and $\rho > 0$. Hence $\rho \in W_c \subset Z_c$
since $I_c = \widetilde{I_c}$. Next, we prove that $\rho = (u^2+
v^2)^{1/2}$.
 In fact $u_n \to u$ and $v_n \to v$ in
$L^2(B(0,R))$ for any $R > 0$.
 Since $[(u^2_n+v_n^2)^{1/2}
-(u^2+v^2)^{1/2}]^2 \le |u_n-u|^2 + |v_n-v|^2$, it follows that
$(u^2_n+v_n^2)^{1/2}\to (u^2+v^2)^{1/2}$ in $L^2(B(0,R))$
for any $R > 0$. Combining this and the fact that
$(u^2_n+v^2_n)^{1/2} = \rho_n \to \rho$ in $L^2$, imply that
$(u^2+v^2)^{1/2} = \rho$ a.e in $\R^3$.\\
Now to end the proof of part i) of Theorem~\ref{theo:stability}, it
suffices to prove
that $$\lim_{n\to \infty} \|\nabla z_n\|^2_2 = \|\nabla
z\|^2_2.$$ By invoking \eqref{eq:4.5}, we have that :
$$\lim_{n\to  \infty} \|\nabla z_n\|^2_2 = \lim_{n\to \infty}
|\nabla \rho_n|^2_2 = |\nabla \rho|^2_2.$$ Thus
$$\|\nabla z\|^2_2 \le \liminf \|\nabla z_n\|^2_2 \le |\nabla \rho|^2_2.$$
On the other hand, by replacing $z_n$ by $z$ in \eqref{eq:4.2}, we have that
$\|\nabla z\|^2_2 \ge |\nabla \rho|^2_2$.

This, together with the weak convergence of $z_n$ to $z$ in
$\widetilde{\Sigma}$, enables us to conclude.
\smallbreak

\noindent{\bf Proof of ii).} Let $z = (u,v) \in Z_c$ and set $\rho =
(u^2+v^2)^{1/2}$.
By the
previous proof, we know that $\rho \in W_c$ and
$$\sum^3_{j=1}\int_{\R^3} \left(\frac{u\partial_j
 v-v \partial_j u}{u^2+v^2}
\right)^2 dx = 0.$$
On the other hand, $\widetilde{E}(z) =
\widetilde{I}_c$ which implies that there exists a Lagrange
multiplier $\lambda \in \mathbb{C}$ such that :
$$E(z)\xi = \frac{\lambda}{2} \int_{\R^3} \(z \bar{\xi} +
\bar{z}\xi \)dx \quad \mbox{ for all } \xi \in \widetilde{\Sigma}.$$
Letting $\xi = z$, it follows immediately that $\lambda \in
\R$ and 
\begin{equation*}
  \left\{
    \begin{aligned}
      &- \frac{1}{2} \Delta u + \frac{|x|^2}{2} u + \lambda_1 (u^2+v^2)u
+
\lambda_2\(K \ast(u^2+v^2)\) u + \lambda u = 0,\\
&- \frac{1}{2} \Delta v + \frac{|x|^2}{2} v + \lambda_1 (u^2+v^2)v
+ \lambda_2 \(K \ast (u^2+v^2)\)v + \lambda v = 0.
    \end{aligned}
\right.
\end{equation*}
Elliptic regularity theory implies that $u,v \in
C^1(\R^3) \cap H^2(\R^3)$.
\smallbreak

Let $\Omega = \{x \in \R^3 : u(x) = 0\}$, then $\Omega$ is
closed since $u$ is continuous. Let us prove that it is also open.
Suppose that $x_0 \in \Omega$, using the fact that $v(x_0) > 0$, we
can find a Ball $B$ centered in $x_0$ such that $v(x) \neq 0$ for
any $x \in B$. Thus for $x \in B$
$$\frac{(u \partial_j v-v\partial_j u)^2}{u^2+v^2} = \(\partial_j
 \(\frac{u}{v}\)\)^2\frac{v^4}{u^2+v^2}\quad \mbox{ for } \quad 1 \le j
 \le 3.$$
This
implies that
$$\int_B \left|\nabla \(\frac{u}{v}\)\right|^2 \frac{v^4}{u^2+v^2}dx = 0.$$
Hence $\nabla (\frac{u}{v}) = 0$ on $B$.
Thus there exists a
constant $K$ such that $\frac{u}{v} = K$ on $B$. But $x_0 \in B$,
then $K \equiv 0$. We have proved that only the two alternatives
below are plausible:
\begin{itemize}
\item[a)] $u \equiv 0$ or $u \neq 0$ for all $x \in \R^3$.
\item[b)] $v \equiv 0$ or $v \neq 0$ for all $x \in \R^3$.
\end{itemize}
Now let us find the relationship between $u$ and $v$.
\smallbreak

\noindent {\bf Proof of iii).} Let $z = (w\cos \sigma,w\sin\sigma)$, $\sigma \in \R, w
\in W_c$. We denote $z$ by $z = e^{i\sigma}w$ by identifying $\C$ with
$\R^2$. Then $z \in \widetilde{S}_c$ and $\widetilde{E}(z) = E(w)
= I_c = \widetilde{I}_c$. Thus $\{e^{i\sigma}w, \sigma \in
\R, w \in W_c\} \subset Z_c$. Conversely, for $z = (u,v) \in
Z_c$, set $w = |z|$. Then $\widetilde{E}(z) = E(w) = \widetilde{I}_c
= I_c$ and $w \in W_c$. If $v \equiv 0$, $w = |w| > 0$ on
$\R^3$ and so $z = e^{i\sigma}w \in W_c$ where $\sigma = 0$
if $u > 0$ and $\sigma = \pi$ if $u < 0$ on $\R^3$.
Otherwise $v(x) \neq 0$ for all $x \in \R^3$. In this case,
it follows that $\nabla (\frac{u}{v}) = 0$ on $\R^3$.
Therefore there exists a constant $\alpha \in \R$ such that
$u = \alpha v$ on $\R^3$. Hence $z = (\alpha+i)v$ and $W =
|\alpha+i||v|$. Let $\theta \in \R$ be such that $(\alpha+i)
= |\alpha+i|e^{i\theta}$ and let $\varphi = 0$ if $v > 0$ and $\varphi = \pi$
if $v < 0$ on $\R^3$. Setting $\sigma = \theta + \varphi$,
we have  $z = (\alpha+i) v = |\alpha+i|
e^{i\theta}|v|e^{i\varphi} = we^{i\sigma}$, where $w \in W_c$.
\end{proof}

\noindent {\bf Acknowledgments}. H. Hajaiej is very grateful to Christof Sparber
and Peter Markowich for very useful discussions.

\providecommand{\bysame}{\leavevmode\hbox to3em{\hrulefill}\thinspace}
\providecommand{\MR}{\relax\ifhmode\unskip\space\fi MR }
\providecommand{\MRhref}[2]{%
  \href{http://www.ams.org/mathscinet-getitem?mr=#1}{#2}
}
\providecommand{\href}[2]{#2}

 \end{document}